\newtheorem{theorem}{Theorem}
\newtheorem{definition}[theorem]{Definition}
\newtheorem{lemma}[theorem]{Lemma}
\newtheorem{proposition}[theorem]{Proposition}
\newtheorem{remark}[theorem]{Remark}
\newcommand{\fin}{n}
\newcommand{\prob}{\mathbb P}
\newcommand{\bX}{\mathbf{X}}
\newcommand{\nei}{\text{ne}}
\renewcommand{\P}{\mathbb P}
\newcommand{\Perp}{\!\perp \! \! \! \perp\!}
\begin{document}

\title{Model selection for Markov random fields on graphs under a mixing condition\tnoteref{t1}}
\tnotetext[t1]{
This work was produced as part of the activities of the \emph{Research, Innovation and Dissemination Center for Neuromathematics} (grant FAPESP 2013/07699-0). It was also supported by FAPESP project (grant 2017/10555-0) \emph{``Stochastic Modeling of Interacting Systems''}  and CNPq Universal project (grant 432310/2018-5) \emph{``Statistics, stochastic processes and discrete structures''}. }

\author{Florencia Leonardi\corref{cor1}\fnref{fn1}}
\ead{florencia@usp.br}
\affiliation{organization={Instituto de Matemática e Estatística, Universidade de São Paulo},
addressline={Rua do Matão 1010},
postcode={05508-090},
city={São Paulo},
country={Brazil}}

\author{Magno T. F. Severino\fnref{fn2}}
\ead{magnotfs@usp.br}

\cortext[cor1]{Corresponding author}
\fntext[fn1]{Partially supported by a CNPq’s research fellowship, grant 311763/2020-0.}
\fntext[fn2]{Supported by CAPES and CNPq  PhD fellowships.}

\begin{abstract}
In this work, we propose a global model selection criterion to estimate the
graph of conditional dependencies of a random vector based on a finite sample. 
By global criterion, we mean optimizing a function over the entire set of possible graphs, eliminating the need to estimate the individual neighborhoods and subsequently combine them to estimate the graph. We prove the almost sure convergence of the graph estimator.
This convergence holds provided the data is a realization of a multivariate stochastic process that satisfies a mixing condition. 
To the best of our knowledge, these are the first results to show the consistency of a model selection criterion for Markov random fields on graphs under non-independent data. 
\end{abstract}

\begin{keyword}
Model selection \sep regularized estimator \sep structure estimation \sep mixing processes.
\end{keyword}
	
\maketitle

\section{Introduction}

We consider a vector-valued stochastic process, denoted as $X^{(1)},\dotsc, X^{(n)}$, with values in $A^d$, where $A$ is a finite alphabet. We assume the process is stationary, with invariant distribution $\pi$. The stationarity condition is not necessary to derive the results we state in the paper, but it is a convenient assumption from the notational point of view. Denote by $G^*$ the graph encoding the conditional dependencies in  $\pi$. Our primary goal in this work is to estimate $G^*$ and the associated conditional probability distributions.

In the case where we assume that the sample $X^{(1)},\dotsc, X^{(n)}$ is independent and identically distributed, we reduce to the classical model selection for discrete graphical models or Markov random fields on graphs. Extensive research has been conducted on these models, including, but not limited to,  \citep{lauritzen1996, koller2009,lerasle_et_al2016, pensar2017,divino2000,leonardi_et_al2023}. 
Furthermore, these models have found applications in various fields, including Biology \cite{shojaie2010}, Social Sciences \cite{strauss1990} or Neuroscience \cite{duarte_et_al2019}. 
Up to this moment, the most studied model has been the binary graphical model with pairwise interactions where structure estimation can be  addressed by using standard logistic regression techniques \citep{strauss1990, ravikumar2010}, distance-based approaches between conditional probabilities  \citep{galves2015, bresler_et_al2018} and maximization of the $\ell_1$-penalized pseudo-likelihood \citep{atchade2014, hoefling09a}; see also \citet{narayana2012}.  
In the case of bigger discrete alphabets or general types of interactions, 
to our knowledge, the only works addressing the structure estimation problem are \cite{loh2013,leonardi_et_al2023}. In \cite{loh2013}, the authors obtain a characterization of the edges in the graph with the zeros in a generalized inverse covariance matrix. Then, this characterisation is used to derive estimators for restricted classes of models, and the authors prove the consistency in probability of these estimators. In the work \citep{leonardi_et_al2023}, a penalized criterion is proposed to estimate the neighborhood of each vertex, and the results are combined to construct the model's graph. Markov random fields on graphs have also been proposed for continuous random variables, where the structure estimation problem has been addressed by $\ell_1$-regularization for Gaussian Markov random fields \citep{meinshausen2006} and also extended to non-parametric 
 models \citep{lafferty2012, liu2012} and  general conditional distributions from the exponential family \cite{yang2015}.

From another perspective, graphical models can be seen as non-homogeneous versions of general random fields or Gibbs distributions on lattices, classical models in stochastic processes, and statistical mechanics theory \cite{georgii2011}.
In such a setting, despite having only one observation within the sample, the number of variables increases.
Given the regularity of the graph (each node has the same neighborhood), inference and model selection can be done based on the unique observation. 
The statistical inference for Markov random fields and Gibbs distributions under this setting has been addressed in Francis Comets' works \citep{comets1992,comets-gidas1992}. 
More recently, model selection criteria, such as the BIC proposed by \citet{schwarz1978}, have been proven consistent under this regular setting \citep{ji1996,csiszar2006b}; see also \citet{tjelmeland1998} and \citet{locherbach2011}. 

From an applied point of view, the assumption of independence of the observations in the non-homogeneous Markov random fields setting is often too restrictive.
Consider, for example, the task of estimating interaction graphs from EEG time series data \citep{cerqueira2017}, river stream flow data \citep{leonardi2020} or daily stock market indices \citep{leonardi_et_al2023}. 
In these scenarios, the independence assumption does not hold, and the methods commonly used for graphical models serve only as approximations to the true underlying distribution. 
While such approximations can be practical from an applied point of view, 
from a theoretical perspective it is interesting to consider the problem of estimation and model selection in a dependence scenario, as for example the case of mixing processes considered here. 

Conventional model selection techniques for graphical models often involve estimating the neighborhoods of individual nodes and constructing the graph based on these neighborhoods, as exemplified by \citet{ravikumar2010}.
But depending on the rule to combine the neighborhoods, the final estimated graph can drastically underestimate or overestimate the set of edges in the graph \citep{leonardi_et_al2023}.
In this work, we adopt a global estimation perspective that overcomes this limitation.
Our approach involves estimating the graph by optimizing the penalized pseudo-likelihood function over the set of all possible simple and undirected graphs.
We provide a proof of convergence, showing that the estimator almost surely converges to the true underlying graph in cases of finite graphical models, provided a mixing condition holds for the generating process.

The paper is organized as follows. 
In Section~2, we provide essential definitions and notations concerning classical graphical models. 
Section~3 is dedicated to introducing the vector-valued mixing process and presenting important auxiliary theoretical results. 
In Section~4, we introduce the penalized maximum pseudo-likelihood estimator of the graph of conditional dependencies and state and prove the main consistency result of the paper.

\section{Markov random fields on graphs}

A \emph{graph} is defined as an ordered pair $G = (V, E)$, where $V$ represents the set of vertices (or nodes), and $E \subseteq V \times V$ is the set of edges connecting pairs of vertices.
We refer to a graph as \emph{undirected} if $(v_i, v_j) \in E$ implies that $(v_j, v_i) \in E$ for all $(v_i, v_j) \in E$, where $v_i, v_j \in V$. 
Furthermore, a graph is considered \emph{simple} if $(v, v) \notin E$ for all $v \in V$.
For the purposes of this work, we concentrate exclusively on undirected simple graphs, which we will henceforth simply call a \emph{graph}.

Consider a graph $G=(V,E)$, with $V = \{1, \ldots, d\}$, for $d \in \mathbb{N}$, and assume we observe at each vertex $v\in V$ a random variable $X_v$, which is discrete and takes values in $A$, a finite alphabet. 
Moreover, let $X = (X_1, \ldots, X_d)$ be the vector of all variables observed on the vertices of the graph. Denote by $\P$ the joint  probability distribution of the vector $X$.  For any $W \subset V$ and any configuration $a_w \in A^{|W|}$ we write
\begin{equation*}
    \pi(a_W) = \mathbb{P}(X_W = a_w)\,.
\end{equation*}
Moreover, if $\pi(a_W) > 0$ then we denote by
\begin{equation*}
    \pi(a_U \vert a_W) = \mathbb{P}(X_U = a_U \vert X_W = a_w),
\end{equation*}
for $a_U \in A^{|U|}$ and $a_W \in A^{|W|},$ the corresponding conditional probability distributions.

For a given vertex $v \in V$, any set $W \subset V$, with $v \notin W$, is called a \emph{neighborhood} of $v.$
Furthermore, $W$ is called \emph{Markov neighborhood} of $v$ if
\begin{equation*}
    \pi(a_v \vert a_U) = \pi(a_v | a_W)
\end{equation*}
for all $U \supset W$, 
$v \notin U$ and 
all $a_U \in A^{|U|}$, 
with $\pi(a_U)>0$. 
The definition of a Markov neighborhood $W$ of $v$ is equivalent to request that for all $U^\prime \subset V \setminus \{v\}$ such that $U^\prime\cap W = \emptyset$, $X_{U^\prime}$ is conditionally independent of $X_v$, given $X_W$. That is, 
\begin{equation*}
X_v \Perp X_{U^\prime} \vert X_W
\end{equation*}
for all $U^\prime$ with $U^\prime \cap W = \emptyset$, where $\Perp$ is the usual symbol denoting independence of random variables.

\begin{figure}[t!]
\begin{center}
\begin{minipage}{6cm}
\begin{center}
\includegraphics*[scale=0.85]{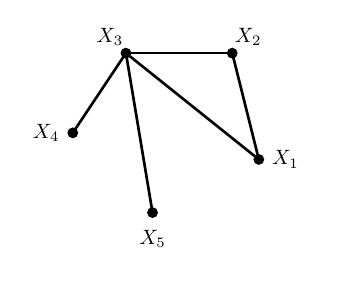}\\
(a)
\end{center}
\end{minipage}
\begin{minipage}{6cm}
\begin{center}
\includegraphics*[scale=0.56]{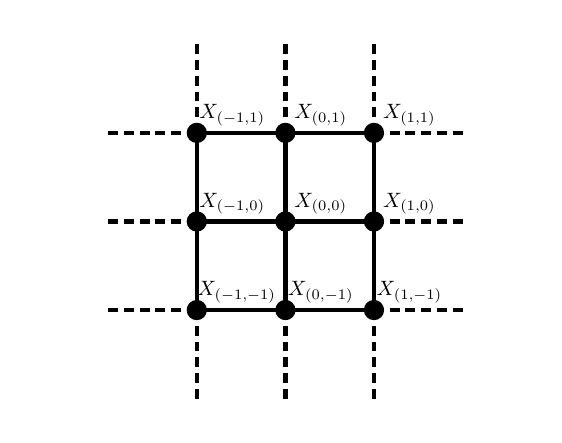}\\
(b)\\[3mm]
\end{center}
\end{minipage}
\end{center}
\caption{Different graph structures for Markov random fields under different settings.
The figure on the left is a finite graphical model or Markov random field on a general graph, and the figure on the right represents the interaction graph in a classical Markov random field or Gibbs distribution on a regular lattice.}
\label{graphs}
\end{figure}

As discussed in \cite{leonardi_et_al2023}, if $W$ is a Markov neighborhood of $v\in V$, then any finite set $U\supset W$ also is a Markov neighborhood of $v$.
In contrast, $W_1$ and $W_2$ being Markov neighborhoods of $v$ does not imply in general that $W_1\cap W_2$ is a Markov neighborhood of $v,$ however this property is satisfied by some probability measures. 
This fact leads to the following definition.

\begin{definition}[Markov intersection property] \label{def:MIP}
For all $v\in V$ and all $W_1$ and $W_2$ Markov neighborhoods of $v$, the set $W_1\cap W_2$ is also a Markov neighborhood of $v$.
\end{definition}

The Markov intersection property is desirable in this context to define the smallest Markov neighborhood of a node. This property is guaranteed under the usually assumed positivity condition \citep[see][]{lauritzen1996}.  But the positivity assumption is not necessary to obtain consistent estimators, and then it is enough to assume the Markov intersection property, see  \citet{leonardi_et_al2023} for details. 

\begin{definition}[Basic neighborhood]\label{def:ne_v}
For $v \in V$, let $\mathcal{W}(v)$ be the set of all subsets of $V$ that are Markov neighborhoods of $v$.
The \emph{basic neighborhood} of $v$ is defined as
\begin{equation}\label{eq:ne_v}
    \nei(v) = \bigcap_{W \in \mathcal{W}(v)} W.
\end{equation}
\end{definition}

By the  Markov intersection property, $\nei(v)$ is the smallest Markov neighborhood of $v \in V$.
Based on these basic neighborhoods, define the graph $G^*=(V,E^*)$ as 
\begin{equation} \label{eq:graph}
(v,w) \in E^* \; \text{ if and only if } w \in \nei(v),
\end{equation}
where $E^* \subseteq V \times V.$
The graph $G^*$ with edges defined in \eqref{eq:graph} is \emph{undirected}, as proved by \citep{leonardi_et_al2023}.
Figure~\ref{graphs} shows two examples of graphs for Markov random fields under different settings: the finite non-homogeneous graphical model case in (a) and the interaction graph in a classical Markov random field, or Gibbs distribution, on a regular lattice \citep{comets-gidas1992,csiszar2006b} in (b).

\section{Vector-valued mixing processes}

In this paper we consider a vector-valued stochastic process $X^{(1)},X^{(2)},$ $\dotsc$, where each variable $X^{(i)}$ is a vector of $d$ components, belonging to the set $A^d$, with $A$ a finite alphabet. 
We denote by $(A^{d\times\infty}, \mathcal F, \P)$ the probability space for the process $\{X^{(i)}\colon i\in \mathbb N\}$. 
Sometimes we need to consider ``slices'' of the entire realization $X^{(1)},\dotsc, X^{(n)}$ on both dimensions. 
To avoid misleading notations we use superscripts to denote the indexes in ``time'' (ranging from 1 to $n$) and subscripts to denote indexes on ``space'' (a subset of $V=\{1,\dots,d\}$). 
For any set $U\subset V$ and any integer interval $i:j$ we denote by $X_U^{(i:j)}$ the sequence $X^{(i)}_U,\dots, X^{(j)}_U$ with $X_U^{(k)}=(X_u^{(k)}\colon u\in U)$, $k=i,\dots,j$. When $U=V$ we avoid the subscript and simply write  $X^{(i:j)}$. The same notation is used for ``realizations'' of the process, denoted in lower case $x_U^{(i:j)}$ instead of the notation for the random variables  $X_U^{(i:j)}$. See an example in Figure~\ref{fig:exemplo_processo}.

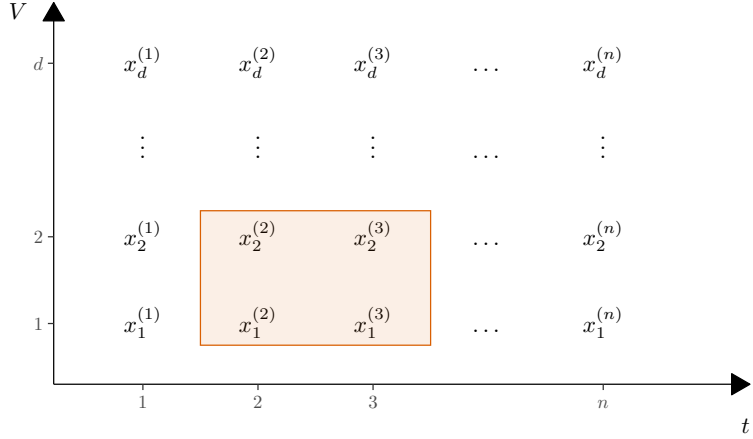
\begin{figure}[t]
    \centering
    \scalebox{0.8}{
\begin{tikzpicture}[x=1pt,y=1pt]
\definecolor{fillColor}{RGB}{255,255,255}
\path[use as bounding box,fill=fillColor,fill opacity=0.00] (0,0) rectangle (361.35,216.81);
\begin{scope}
\path[clip] (  0.00,  0.00) rectangle (361.35,216.81);
\definecolor{drawColor}{RGB}{255,255,255}
\definecolor{fillColor}{RGB}{255,255,255}

\path[draw=drawColor,line width= 0.6pt,line join=round,line cap=round,fill=fillColor] (  0.00,  0.00) rectangle (361.35,216.81);
\end{scope}
\begin{scope}
\path[clip] ( 26.64, 30.69) rectangle (355.85,211.31);
\definecolor{fillColor}{RGB}{255,255,255}

\path[fill=fillColor] ( 26.64, 30.69) rectangle (355.85,211.31);
\definecolor{drawColor}{RGB}{0,0,0}

\node[text=drawColor,anchor=base,inner sep=0pt, outer sep=0pt, scale=  1.10] at ( 68.81, 55.62) {$x_1^{(1)}$};

\node[text=drawColor,anchor=base,inner sep=0pt, outer sep=0pt, scale=  1.10] at ( 68.81, 96.67) {$x_2^{(1)}$};

\node[text=drawColor,anchor=base,inner sep=0pt, outer sep=0pt, scale=  1.10] at ( 68.81,178.77) {$x_d^{(1)}$};

\node[text=drawColor,anchor=base,inner sep=0pt, outer sep=0pt, scale=  1.10] at (123.23, 55.62) {$x_1^{(2)}$};

\node[text=drawColor,anchor=base,inner sep=0pt, outer sep=0pt, scale=  1.10] at (123.23, 96.67) {$x_2^{(2)}$};

\node[text=drawColor,anchor=base,inner sep=0pt, outer sep=0pt, scale=  1.10] at (123.23,178.77) {$x_d^{(2)}$};

\node[text=drawColor,anchor=base,inner sep=0pt, outer sep=0pt, scale=  1.10] at (177.64, 55.62) {$x_1^{(3)}$};

\node[text=drawColor,anchor=base,inner sep=0pt, outer sep=0pt, scale=  1.10] at (177.64, 96.67) {$x_2^{(3)}$};

\node[text=drawColor,anchor=base,inner sep=0pt, outer sep=0pt, scale=  1.10] at (177.64,178.77) {$x_d^{(3)}$};

\node[text=drawColor,anchor=base,inner sep=0pt, outer sep=0pt, scale=  1.10] at (286.47, 55.62) {$x_1^{(n)}$};

\node[text=drawColor,anchor=base,inner sep=0pt, outer sep=0pt, scale=  1.10] at (286.47, 96.67) {$x_2^{(n)}$};

\node[text=drawColor,anchor=base,inner sep=0pt, outer sep=0pt, scale=  1.10] at (286.47,178.77) {$x_d^{(n)}$};

\node[text=drawColor,anchor=base,inner sep=0pt, outer sep=0pt, scale=  1.10] at (232.06, 55.62) {$\dots$};

\node[text=drawColor,anchor=base,inner sep=0pt, outer sep=0pt, scale=  1.10] at (232.06, 96.67) {$\dots$};

\node[text=drawColor,anchor=base,inner sep=0pt, outer sep=0pt, scale=  1.10] at (232.06,137.72) {$\dots$};

\node[text=drawColor,anchor=base,inner sep=0pt, outer sep=0pt, scale=  1.10] at (232.06,178.77) {$\dots$};

\node[text=drawColor,anchor=base,inner sep=0pt, outer sep=0pt, scale=  1.10] at ( 68.81,137.72) {$\vdots$};

\node[text=drawColor,anchor=base,inner sep=0pt, outer sep=0pt, scale=  1.10] at (123.23,137.72) {$\vdots$};

\node[text=drawColor,anchor=base,inner sep=0pt, outer sep=0pt, scale=  1.10] at (177.64,137.72) {$\vdots$};

\node[text=drawColor,anchor=base,inner sep=0pt, outer sep=0pt, scale=  1.10] at (286.47,137.72) {$\vdots$};
\definecolor{drawColor}{RGB}{217,95,2}
\definecolor{fillColor}{RGB}{217,95,2}

\path[draw=drawColor,line width= 0.6pt,line cap=rect,fill=fillColor,fill opacity=0.10] ( 96.02, 49.16) rectangle (204.85,112.79);
\end{scope}
\begin{scope}
\path[clip] (  0.00,  0.00) rectangle (361.35,216.81);
\definecolor{drawColor}{RGB}{0,0,0}

\path[draw=drawColor,line width= 0.6pt,line join=round] ( 26.64, 30.69) --
	( 26.64,211.31);
\definecolor{fillColor}{RGB}{0,0,0}

\path[draw=drawColor,line width= 0.6pt,line join=round,fill=fillColor] ( 31.64,202.65) --
	( 26.64,211.31) --
	( 21.64,202.65) --
	cycle;
\end{scope}
\begin{scope}
\path[clip] (  0.00,  0.00) rectangle (361.35,216.81);
\definecolor{drawColor}{gray}{0.30}

\node[text=drawColor,anchor=base east,inner sep=0pt, outer sep=0pt, scale=  0.88] at ( 21.69, 56.39) {1};

\node[text=drawColor,anchor=base east,inner sep=0pt, outer sep=0pt, scale=  0.88] at ( 21.69, 97.44) {2};

\node[text=drawColor,anchor=base east,inner sep=0pt, outer sep=0pt, scale=  0.88] at ( 21.69,179.54) {$d$};
\end{scope}
\begin{scope}
\path[clip] (  0.00,  0.00) rectangle (361.35,216.81);
\definecolor{drawColor}{gray}{0.20}

\path[draw=drawColor,line width= 0.6pt,line join=round] ( 23.89, 59.42) --
	( 26.64, 59.42);

\path[draw=drawColor,line width= 0.6pt,line join=round] ( 23.89,100.47) --
	( 26.64,100.47);

\path[draw=drawColor,line width= 0.6pt,line join=round] ( 23.89,182.57) --
	( 26.64,182.57);
\end{scope}
\begin{scope}
\path[clip] (  0.00,  0.00) rectangle (361.35,216.81);
\definecolor{drawColor}{RGB}{0,0,0}

\path[draw=drawColor,line width= 0.6pt,line join=round] ( 26.64, 30.69) --
	(355.85, 30.69);
\definecolor{fillColor}{RGB}{0,0,0}

\path[draw=drawColor,line width= 0.6pt,line join=round,fill=fillColor] (347.19, 25.69) --
	(355.85, 30.69) --
	(347.19, 35.69) --
	cycle;
\end{scope}
\begin{scope}
\path[clip] (  0.00,  0.00) rectangle (361.35,216.81);
\definecolor{drawColor}{gray}{0.20}

\path[draw=drawColor,line width= 0.6pt,line join=round] ( 68.81, 27.94) --
	( 68.81, 30.69);

\path[draw=drawColor,line width= 0.6pt,line join=round] (123.23, 27.94) --
	(123.23, 30.69);

\path[draw=drawColor,line width= 0.6pt,line join=round] (177.64, 27.94) --
	(177.64, 30.69);

\path[draw=drawColor,line width= 0.6pt,line join=round] (286.47, 27.94) --
	(286.47, 30.69);
\end{scope}
\begin{scope}
\path[clip] (  0.00,  0.00) rectangle (361.35,216.81);
\definecolor{drawColor}{gray}{0.30}

\node[text=drawColor,anchor=base,inner sep=0pt, outer sep=0pt, scale=  0.88] at ( 68.81, 19.68) {1};

\node[text=drawColor,anchor=base,inner sep=0pt, outer sep=0pt, scale=  0.88] at (123.23, 19.68) {2};

\node[text=drawColor,anchor=base,inner sep=0pt, outer sep=0pt, scale=  0.88] at (177.64, 19.68) {3};

\node[text=drawColor,anchor=base,inner sep=0pt, outer sep=0pt, scale=  0.88] at (286.47, 19.68) {$n$};
\end{scope}
\begin{scope}
\path[clip] (  0.00,  0.00) rectangle (361.35,216.81);
\definecolor{drawColor}{RGB}{0,0,0}

\node[text=drawColor,anchor=base east,inner sep=0pt, outer sep=0pt, scale=  1.10] at (355.85,  7.64) {$t$};
\end{scope}
\begin{scope}
\path[clip] (  0.00,  0.00) rectangle (361.35,216.81);
\definecolor{drawColor}{RGB}{0,0,0}

\node[text=drawColor,anchor=base east,inner sep=0pt, outer sep=0pt, scale=  1.10] at ( 14.36,203.73) {$V$};
\end{scope}
\end{tikzpicture}}
    \caption{Representation of a realization of the process $\bX$, with set of vertices $V = \{1, \dots, d\}$ observed from time $1$ to $n$.
    Subscript indicates the vertex and superscript indicates the time at which the observation was taken.
    The highlighted rectangle indicates the observed slice $x_{\{1,2\}}^{(2:3)}.$}
    \label{fig:exemplo_processo}
\end{figure}

We say the processes $X^{(1)},\dotsc, X^{(n)}$ 
satisfies a mixing condition with rate $\{\psi(\ell)\}_{\ell\in\mathbb N}$   if for each $k, m$ 
and each $x^{(1:k)}$, $x^{(1:m)}$ with $\mathbb P(X^{(1:m)}=x^{(1:m)})>0$ 
we have that 
\begin{equation}\label{mixing}
\begin{split}
\bigl| \mathbb P( X^{(n:(n+k-1))}=x^{(1:k)}\, |\,& X^{(1:m)}=x^{(1:m)}) -  \mathbb P( X^{(n:(n+k-1))}=x^{(1:k)})\bigr|\\[3mm]
& \leq\; \psi(\ell)\,\mathbb P( X^{(n:(n+k-1))}=x^{(1:k)})
\end{split}
\end{equation}
for $n\geq m+\ell$.

Assume we observe a sample of size $n$ of the process, denoted by  
$\{x^{(i)}\colon i=1,\dots,n\}$.
Since the stationary distribution of the process $\pi$ is not known, we must estimate it from the data. For any $W\subset V$ and any $a_W\in A^{W}$ denote by 
\begin{equation*}
    \widehat\pi(a_W)  = \frac{N(a_W)}{n}\,,
\end{equation*}
where $N(a_W)$ denotes the number of times the configuration $a_W$ appears in the sample $x^{(1)}, \dotsc, x^{(n)}$. If $ \widehat\pi(a_W)>0$, we can also define the conditional probabilities 
\begin{equation}\label{hatpicond}
    \widehat\pi(a_W|a_{W'})  = \frac{\widehat\pi(a_{W\cup W'})}{\widehat\pi(a_{W'})}\,,
\end{equation}
for two disjoint subsets  $W,W'\subset V$ and configurations  $a_W\in A^W, a_{W'}\in A^{W'}$. 

Based on results by \citet{csiszar2002}, we can state and prove two propositions showing the rate of convergence of the empirical probabilities in a  stationary stochastic process with exponential mixing sequence.  From now on, the phrase \emph{eventually almost surely} means with probability one, for all $n$ large enough. 

\begin{proposition}\label{prop:limite_conjunto}
Assume the process $\{X^{(i)}\colon i\in \mathbb N\}$ satisfies the mixing condition \eqref{mixing}  with mixing rate $\psi(\ell) =O(1/\ell^{1+\epsilon})$, for some $\epsilon>0$. Then for any $\delta>0$, 
any $W \subset V$ and any $a_W\in A^W$ we have that 
\[
| \widehat\pi(a_W)  -  \pi(a_W)| \;<\; \sqrt{\frac{\delta\log n}{n}}
\]
eventually almost surely as $n\to\infty$. Moreover, for any disjoint sets $W,W' \subset V$ and any $a_W\in A^W$, $a_{W'}\in A^{W'}$ we have that
\begin{equation*}
    \Big\vert \widehat\pi(a_W|a_{W'}) - \pi(a_W|a_{W'}) \Big\vert < \sqrt{\frac{\delta\log n}{N(a_{W'})}},
\end{equation*}
eventually almost surely as $n \rightarrow \infty.$
\end{proposition}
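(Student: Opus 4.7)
The plan is to establish both inequalities by combining a Bernstein-type deviation bound for $\psi$-mixing processes with a Borel--Cantelli argument.

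For the marginal bound, I would fix $W\subset V$ and $a_W\in A^W$ and rewrite the centred deviation as
\begin{equation*}
\widehat\pi(a_W)-\pi(a_W)=\frac{1}{n}\sum_{i=1}^n \bigl(\mathds{1}\{X_W^{(i)}=a_W\}-\pi(a_W)\bigr).
\end{equation*}
The summability of $\psi(\ell)=O(\ell^{-(1+\epsilon)})$ lets one invoke a deviation bound of the form $\P(|\widehat\pi(a_W)-\pi(a_W)|>t)\le C\exp(-c\,n\,t^2)$ for small $t$, of the type established by \citet{csiszar2002}. Setting $t_n=\sqrt{\delta\log n /n}$ yields a tail of order $n^{-c\delta}$, which is summable in $n$, and the Borel--Cantelli lemma gives the eventual almost sure bound.

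For the conditional bound, I would use the identity
\begin{equation*}
\widehat\pi(a_W\mid a_{W'})-\pi(a_W\mid a_{W'})=\frac{1}{N(a_{W'})}\sum_{i=1}^n Z^{(i)},
\end{equation*}
where $Z^{(i)}=\mathds{1}\{X_{W\cup W'}^{(i)}=a_{W\cup W'}\}-\pi(a_W\mid a_{W'})\,\mathds{1}\{X_{W'}^{(i)}=a_{W'}\}$ is bounded, centred, and inherits the $\psi$-mixing property. Applying the same deviation bound to $\sum_i Z^{(i)}$ gives $|\sum_{i=1}^n Z^{(i)}|<\sqrt{\delta'\,n\log n}$ eventually almost surely for arbitrary $\delta'>0$. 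The first part of the proposition, applied to $W'$, ensures $N(a_{W'})\ge n\pi(a_{W'})/2$ eventually a.s.\ when $\pi(a_{W'})>0$ (the case $\pi(a_{W'})=0$ is vacuous, since then $N(a_{W'})=0$ eventually). Combining,
\begin{equation*}
\bigl|\widehat\pi(a_W\mid a_{W'})-\pi(a_W\mid a_{W'})\bigr|\le \frac{\sqrt{\delta' n\log n}}{N(a_{W'})}=\sqrt{\frac{\delta'\cdot n/N(a_{W'})}{N(a_{W'})}\,\log n},
\end{equation*}
and choosing $\delta'\le \delta\pi(a_{W'})/2$ delivers the stated bound.

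The principal technical obstacle is establishing the concentration bound with a constant sharp enough to guarantee summability for \emph{every} $\delta>0$: a naive Hoeffding-type bound for $\psi$-mixing data yields a tail of order $n^{-c\delta}$ with a fixed $c>0$, which is summable only for $\delta$ above a threshold. The remedy, inspired by the blocking technique of \citet{csiszar2002}, is to partition $\{1,\dots,n\}$ into blocks separated by gaps of length $\ell_n\to\infty$ slowly, decouple non-adjacent blocks up to a factor $1+\psi(\ell_n)$ via the mixing inequality \eqref{mixing}, and then apply a sharp Bernstein inequality to the resulting nearly-independent block sums. The polynomial decay of $\psi$ controls the accumulated dependence error and allows the exponent constant to be driven arbitrarily large, which is what is needed so that the Borel--Cantelli step covers every positive $\delta$.
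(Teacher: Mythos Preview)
Your overall architecture for the second part---writing $\widehat\pi(a_W\mid a_{W'})-\pi(a_W\mid a_{W'})$ as $N(a_{W'})^{-1}\sum_i Z^{(i)}$ with the centred indicator $Z^{(i)}$ and then lower-bounding $N(a_{W'})$ via the first part---is exactly what the paper does. The problem is in the concentration input you intend to feed into both parts.

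A Bernstein-type inequality for $\psi$-mixing sequences gives at best a tail of the form $\P\bigl(|\widehat\pi(a_W)-\pi(a_W)|>t\bigr)\le C\exp\bigl(-nt^2/(2\sigma^2+o(1))\bigr)$, where $\sigma^2$ is the long-run variance. Plugging $t=\sqrt{\delta\log n/n}$ yields $n^{-\delta/(2\sigma^2)+o(1)}$, which is summable only for $\delta>2\sigma^2$. Your proposed remedy---blocking \`a la \citet{csiszar2002}---does not help here: blocking is a device to \emph{transfer} i.i.d.\ concentration bounds to the mixing setting, not to sharpen them. Even for genuinely i.i.d.\ $[0,1]$-valued variables, the exponent in Hoeffding/Bernstein is pinned down by the variance (Cram\'er's theorem shows $\exp(-nt^2/(2\sigma^2))$ is the correct small-$t$ rate), so the constant cannot be ``driven arbitrarily large''. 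The Borel--Cantelli step therefore fails for every $\delta\le 2\sigma^2$, and the proposition asks for \emph{all} $\delta>0$.

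The paper sidesteps this entirely by invoking the law of the iterated logarithm for $\psi$-mixing processes due to Oodaira and Yoshihara, which yields directly that $|Z_n|<(1+\epsilon)\sqrt{2\sigma^2 n\log\log n}$ eventually almost surely. Since $\log\log n<\delta\log n$ for all large $n$ whatever $\delta>0$ is, the bound $|\widehat\pi(a_W)-\pi(a_W)|<\sqrt{\delta\log n/n}$ follows for every $\delta$ at once, with no summability issue. The LIL is precisely the tool that converts the exponential tail into an eventual-a.s.\ rate of $\sqrt{\log\log n/n}$ (via geometric subsequences rather than summing over every $n$), and that extra $\log$ is exactly what you need. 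Replacing your concentration step by the LIL makes the rest of your argument go through unchanged.
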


The proof of Proposition~\ref{prop:limite_conjunto} is postponed to the Appendix. 

\section{The graph's estimator and its consistency}

In this paper we take a regularized pseudo maximum likelihood approach to estimate  the graph $G^*$, given a sample $x^{(1)},\dotsc, x^{(n)}$ of the stochastic process.  
Instead of estimating each neighborhood and then combining the results, as is proposed in several works, we globally estimate the graph $G^*$ by optimizing a function over the set of all simple graphs over $V$. 

Given any graph $G$ defined on the same set of vertices $V$,  the pseudo-likelihood function is defined by 
\[
L(G) \;=\;  \prod_{i=1}^n \prod_{v \in V}  \pi(x^{(i)}_v | x^{(i)}_{G(v)})\,,
\]
where $G(v)$ denotes the neighborhood of node $v$ in the graph $G$, that is $G(v)=\{u\in V\colon (u,v)\in E\}$. 
As the conditional probabilities of $\pi$ are not known, we can estimate them from the data, obtaining the  maximum pseudo-likelihood given by 
\[
\widehat L(G) \;=\;  \prod_{i=1}^n \prod_{v \in V}  \widehat\pi(x^{(i)}_v | x^{(i)}_{G(v)})\,,
\]
with $\widehat\pi(x^{(i)}_v | x^{(i)}_{G(v)})$ defined as in \eqref{hatpicond}, taking $W=\{v\}$ 
and $W'=G(v)$. 

Applying the logarithm and taking into account the number of occurrences of each configuration in the sample, we can write the log pseudo likelihood function as
\begin{equation}\label{log-pseudo}
\log \widehat L(G) \;=\;  \sum_{v \in V}\sum_{a_v, a_{G(v)}}\;   \widehat\pi(a_v|a_{G(v)})^{N(a_v,a_{G(v)})}\,,
\end{equation}
where the sum is taken over all $v\in V$ and all configurations $a_v\in A$, $a_{G(v)}\in A^{G(v)}$ such that $N(a_v,a_{G(v)})>0$. 

We then define the graph estimator by 
\begin{equation}\label{hatP}
\widehat G\;=\; \underset{G}{\arg\max} \bigl\{\, \log \widehat L(G)  - \lambda_n \sum_{v\in V} |A|^{|G(v)|} \,\bigr\}\,,
\end{equation}
with $|G(v)|$ denoting the cardinal of the set $G(v)$ and $\lambda_n$ being a non-negative decreasing sequence.  

The main result in this paper is the following consistency result for the graph estimator $\widehat G$. 

\begin{theorem}\label{consistencia}
Assume the process $\{X^{(i)}\colon i \in\mathbb Z\}$ satisfies the mixing condition \eqref{mixing} with rate $\psi(\ell) = O(1/\ell^{1+\epsilon})$ for some $\epsilon>0$. Then, taking  
$\lambda_n= c\log n$, with $c>0$,  we have that $\widehat G$ defined in \eqref{hatP}   satisfies $\widehat G=G^*$ eventually almost surely when  $\fin\to \infty$.
\end{theorem}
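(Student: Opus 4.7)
The plan is to show that for every graph $G \neq G^*$, the penalized criterion satisfies $\log \widehat L(G) - \lambda_n \sum_v |A|^{|G(v)|} < \log \widehat L(G^*) - \lambda_n \sum_v |A|^{|G^*(v)|}$ eventually almost surely. Since $V$ is finite, the collection of graphs on $V$ is finite, so a union bound over $G \neq G^*$ then yields $\widehat G = G^*$ eventually a.s. I would split the analysis into two regimes: the \emph{underfit} case, where some $G(v)$ fails to contain $G^*(v)=\nei(v)$, and the \emph{overfit} case, where $G(v) \supseteq G^*(v)$ for every $v$ but $G \neq G^*$.

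It is convenient to rewrite the log pseudo-likelihood in terms of empirical conditional entropy $\widehat H(X_v\mid X_W) = -\sum_{a_v,a_W} \widehat\pi(a_v,a_W)\log\widehat\pi(a_v\mid a_W)$, so that $\log \widehat L(G) = -n\sum_v \widehat H(X_v\mid X_{G(v)})$. In the underfit case, the definition of $\nei(v)$ as the smallest Markov neighborhood of $v$ forces $\pi(x_v\mid x_{G(v)})$ to differ from $\pi(x_v\mid x_{G(v)\cup\nei(v)})$ on some configuration of positive probability whenever $G(v)\not\supseteq\nei(v)$. By Gibbs's inequality this yields a strict population gap $H(X_v\mid X_{G(v)}) - H(X_v\mid X_{G^*(v)}) \geq \Delta > 0$. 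Proposition~\ref{prop:limite_conjunto} gives a.s.\ convergence of every joint empirical probability at rate $\sqrt{\log n/n}$, so the empirical entropies converge to their population values and the gap transfers: $\widehat H(X_v\mid X_{G(v)}) - \widehat H(X_v\mid X_{G^*(v)}) \geq \Delta/2$ eventually a.s. Hence $\log\widehat L(G^*)-\log\widehat L(G) \geq n\Delta/2$, which dominates the $O(\log n)$ penalty difference.

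The overfit case is the technical heart of the argument. The Markov property forces the two population conditional distributions to coincide, so $H(X_v\mid X_{G(v)}) = H(X_v\mid X_{G^*(v)})$ and one must quantify the empirical fluctuation $\widehat H(X_v\mid X_{G^*(v)})-\widehat H(X_v\mid X_{G(v)})$, which is nonnegative by monotonicity of empirical conditional entropy in the conditioning set. Expressing this difference as an empirical conditional mutual information and Taylor-expanding $\log\widehat\pi(a_v\mid a_W)$ around the common true value, together with the $\sqrt{\log n/n}$ rate from Proposition~\ref{prop:limite_conjunto} applied to each of the finitely many configurations with $\pi>0$, the linear term vanishes in aggregate and the quadratic term produces a bound of the form
$$n\bigl[\widehat H(X_v\mid X_{G^*(v)})-\widehat H(X_v\mid X_{G(v)})\bigr] \;\leq\; K\log n$$
eventually a.s., where $K$ depends only on $|A|$, the cardinalities $|G(v)|$, and the minimum positive value of $\pi$. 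The penalty difference, on the other hand, is at least $\lambda_n = c\log n$, since $|G(v)|>|G^*(v)|$ for at least one $v$ contributes a strictly positive integer to $\sum_v|A|^{|G(v)|}-\sum_v|A|^{|G^*(v)|}$. Taking $c$ above the constant $K$ therefore makes the penalty strictly dominate and gives $J_n(G^*)>J_n(G)$.

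The hard part will be the overfit step: producing the sharp $O(\log n)$ a.s.\ bound on the empirical conditional mutual information from the $\sqrt{\log n/n}$ rate of Proposition~\ref{prop:limite_conjunto} demands a careful quadratic Taylor expansion (the arithmetic $n\cdot(\sqrt{\log n/n})^2=\log n$ is what sets the order) and careful handling of configurations whose true probability is small but strictly positive, so that $N(a_W)$ is still large enough to make the conditional estimates meaningful. Once both cases are established, the finiteness of the set of graphs on $V$ permits a union bound and concludes $\widehat G=G^*$ eventually a.s.
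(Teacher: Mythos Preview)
Your overall architecture matches the paper's: split $\{\widehat G\neq G^*\}$ into an overfitting piece ($G\supsetneq G^*$) and an underfitting piece ($G^*\not\subset G$), use the $\sqrt{\delta\log n/n}$ rates of Proposition~\ref{prop:limite_conjunto} to control the overfit fluctuation at order $\log n$, and exploit a strictly positive population gap for underfit. The paper uses the explicit $\chi^2$ bound on the KL divergence (Lemma~\ref{KLTV}) where you invoke a Taylor expansion, and in the underfit case it routes the comparison through the complete graph $G_{\max}$ rather than comparing $G$ to $G^*$ directly; these are stylistic differences.

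There is, however, a genuine gap in your overfit conclusion. You end with ``Taking $c$ above the constant $K$ therefore makes the penalty strictly dominate,'' but the theorem asserts consistency for \emph{every} $c>0$, not only for $c$ sufficiently large. The point you are missing is that Proposition~\ref{prop:limite_conjunto} holds for \emph{any} $\delta>0$; the constant you call $K$ is really $K(\delta)$ and tends to $0$ as $\delta\downarrow 0$. The paper makes this explicit: it bounds the log-likelihood gap by $(\delta|A|/\pi_{\min})\log n\cdot\sum_v|A|^{|G(v)|}$ and the penalty gap from below by $(c/2)\log n\cdot\sum_v|A|^{|G(v)|}$, then for the \emph{given} $c>0$ picks $\delta<c\,\pi_{\min}/|A|$. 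As written, your argument proves a strictly weaker statement. Relatedly, your penalty lower bound ``at least $\lambda_n$'' is too coarse to close the inequality uniformly in $G$: since your $K$ depends on the cardinalities $|G(v)|$ through the number of configurations summed over, you need the penalty gap to scale the same way, which is exactly why the paper keeps the factor $\sum_v|A|^{|G(v)|}$ on both sides.

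A second, smaller gap sits in your underfit step. You obtain $\widehat H(X_v\mid X_{G(v)})-\widehat H(X_v\mid X_{G^*(v)})\geq\Delta/2$ for a vertex $v$ with $G(v)\not\supseteq\nei(v)$ and then jump to $\log\widehat L(G^*)-\log\widehat L(G)\geq n\Delta/2$. But a graph $G$ with $G^*\not\subset G$ may simultaneously have other vertices $w$ with $G(w)\supsetneq G^*(w)$, and for those the empirical entropy difference has the wrong sign. You must either invoke your overfit bound at those vertices (their total contribution is $O(\log n)$ and is absorbed by $n\Delta/2$), or do what the paper does: compare $G$ first to $G_{\max}$, where every neighbourhood is enlarged so no sign issues arise, and then reuse Case~(a) to pass from $G_{\max}$ to $G^*$.
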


Before proving Theorem~\ref{consistencia}, we recall the definition of  the  K\"ullback-Leibler divergence between two probability distributions $p$ 
and $q$ over $A$. It is given by
 \begin{equation}\label{KL}
 D(p;q) = \sum_{a\in A} p(a)\log\frac{p(a)}{q(a)}
 \end{equation}
where, by convention, $p(a) \log\frac{p(a)}{q(a)}=0$ if $p(a)=0$ and $p(a) \log\frac{p(a)}{q(a)}=+\infty$ if $p(a) > q(a)=0$. An important property of the K\"ullback-Leibler divergence is that  $D(p;q) =0$ if and only if $p(a)=q(a)$ for all $a\in A$.

Denote by $G_{\max}$ the complete graph over $V$, that is $G_{\max} = (V, E_{\max})$ with 
\[
E_{\max} \;=\; \{(u,v)\in V\times V\colon u\neq v\}\,.
\]
Observe that in particular we have that $G_{\max}(v) = V\setminus\{v\}$ for all $v\in V$. 
For any $v \in V$, denote by 
\begin{equation} \label{eq:alpha}
    \alpha(v) \;=\; \min_{\substack{G\colon G^*\not\subset G}}\; \Bigg\{ \,  \sum_{a_{G_{\max}(v)}} \; \pi (a_{G_{\max}(v)}) D\big(\pi(\cdot_v|a_{G^*(v)}); \pi(\cdot_v|a_{G(v)})\big) \, \Bigg\}
\end{equation}
where $\pi(\cdot_v|a_{G^*(v)})$ denotes the probability distribution over $A$ given by  $\{\pi(a_v|a_{G^*(v)})\}_{a_v\in A}$ and similarly for $\pi(\cdot_v|a_{G(v)})$. 
By Definition~\ref{def:ne_v} we must have $\alpha(v)>0$, for a proof see \citet{leonardi_et_al2023}. 

\begin{proof}[Proof of Theorem~\ref{consistencia}]
First observe that we can decompose the event $\{\widehat G\neq G^*\}$ as the union of the two events $\{G^* \subsetneq \widehat G \} \cup  \{G^* \not\subset \widehat G\}$.  We will consider these two events separately, proving that  eventually almost surely as $n\to\infty$ neither of them can happen, implying that $\widehat G = G^*$. \\

\noindent Case (a), $P(\{G^* \subsetneq \widehat G\})=0$ e.a.s as $n\to\infty$ (non-overfitting).  To prove that this event will not happen, we will prove that for all  graphs $G\supsetneq G^*$ we have that 
\begin{equation}\label{eq:logp}
\log \widehat L(G) - \lambda_n \sum_{v\in V} |A|^{|G(v)|}  \;<\; \log \widehat L(G^*) - \lambda_n\sum_{v\in V} |A|^{|G^*(v)|}
\end{equation}
or equivalently that 
\begin{equation}\label{eq:logp2}
\log \widehat L(G)  - \log \widehat L(G^*) \;<\;  \lambda_n\,\Bigl( \sum_{v\in V} |A|^{|G(v)|} - \sum_{v\in V} |A|^{|G^*(v)|}\Bigr)
\end{equation}
eventually almost surely as $n\to\infty$, proving that $\widehat G\neq G$ for all $G\supsetneq G^*$. 
Observe that
\begin{equation*}
\log \widehat L(G)  \;=\;  \sum_{v \in V}  \sum_{a_v, a_{G(v)}} N(a_v, a_{G(v)}) \log \widehat\pi( a_v | a_{G(v)})
\end{equation*}
and similarly for $\log \widehat L(G^*)$. 
Then
\begin{equation}\label{eq3}
\begin{split}
\log \widehat L(G) - \log \widehat L(G^*) 
\; = \;&  \sum_{v \in V}\sum_{a_v,a_{G(v)}} N(a_v,a_{G(v)}) \log \widehat \pi(a_v|a_{G(v)})\\
&- \sum_{v \in V}\sum_{a_v,a_{G^*(v)}} N(a_v,a_{G^*(v)}) \log \widehat \pi(a_v|a_{G^*(v)})\,.
\end{split}
\end{equation}
Fix $v\in V$. By the definition of the maximum likelihood estimators and as $G\supset G^*$ we have that
\begin{equation}\label{eq4}
\begin{split}
  \sum_{a_v,a_{{G^*}(v)}} N(a_v,a_{{G^*}(v)}) \log \widehat \pi(a_v|a_{{G^*}(v)}) \;&\geq\; \sum_{a_v,a_{{G^*}(v)}} N(a_v,a_{{G^*}(v)})\log \pi(a_v|a_{{G^*}(v)})\\
 &= \;  \sum_{a_v,a_{{G}(v)}} N(a_v,a_{{G}(v)}) \log \pi(a_v|a_{{G^*}(v)})\,.
\end{split}
\end{equation}
Therefore, using \eqref{eq4},  the difference in \eqref{eq3}  can be upper-bounded by
\begin{equation*}\label{diff}
 \sum_{v \in V}\sum_{a_v,a_{G(v)}} N(a_v,a_{G(v)}) \log \frac{\widehat \pi(a_v|a_{G(v)})}{\pi(a_v|a_{G(v)})} \;=\;   \sum_{v \in V}\sum_{ a_{G(v)}} N(a_{G(v)})D(\widehat \pi(\cdot_v|a_{G(v)}) \,;\,\pi(\cdot_v|a_{G(v)}) )\,,
\end{equation*}
where $D$ denotes the K\"ullback-Leibler divergence, see \eqref{KL}. Therefore we have, by Lemma~\ref{KLTV}, that 
\begin{equation}\label{eqD}
\begin{split}
 \sum_{v \in V}\sum_{a_{G(v)}} N(a_{G(v)})&D(\widehat \pi(\cdot_v|a_{G(v)}) \,;\,\pi(\cdot_v|a_{G(v)}) ) \\
&\leq\;  \sum_{v \in V}\sum_{a_{G(v)}} N(a_{G(v)}) \sum_{a_v\in A} \frac{[\,\widehat \pi(a_v|a_{G(v)}) - \pi(a_v|a_{G(v)})\,]^2}{\pi(a_v|a_{G(v)})}.
\end{split}
\end{equation}
Then, by Proposition~\ref{prop:limite_condicional} with $\delta>0$ and \eqref{eqD} we have, with probability  1, for $n$ sufficiently large  that 
\begin{align*}
 \sum_{v \in V}\sum_{a_{G(v)}} N(a_{G(v)})\,&D(\widehat \pi(\cdot_v|a_{G(v)}) \,;\,\pi(\cdot_v|a_{G(v)}) ) \;\leq\;   \frac{\delta |A| \log n }{\pi_{\min}}\,\sum_{v\in V} |A|^{G(v)}\,.
\end{align*}
On the other hand we have that 
\begin{equation}
\begin{split}
\lambda_n \Bigl( \sum_{v\in V} |A|^{G(v)} -   \sum_{v\in V} |A|^{G^*(v)}  \Bigr) \;&=\; \lambda_n  \sum_{v\in V} |A|^{G(v)}\Bigl(1 - \frac{ \sum_{v\in V} |A|^{G^*(v)} }{\sum_{v\in V} |A|^{G(v)}} \Bigr)\\
&\geq \; \frac{\lambda_n}2  \sum_{v\in V} |A|^{G(v)}
\end{split}
\end{equation}
as
\[
 \frac{ \sum_{v\in V} |A|^{G^*(v)} }{\sum_{v\in V} |A|^{G(v)}} \;\leq\; \frac{1}{|A|}\;\leq\; \frac12 \,.
 \]
Then for  $\lambda_n=c\log n$ there exists $\delta>0$ such that 
\begin{equation*}
 \frac{\delta |A|}{\pi_{\min}}\;<\;  c
\end{equation*}
that is, we take $\delta < c\, \pi_{\min}/|A|$ and we have that 
\[
\max_{G\supsetneq G^*} \; \{\, \log \widehat L(G) -\lambda_n\, \sum_{v\in V}|A|^{|G(v)|}\,\}  \; <\; \log \widehat L(G^*) - \lambda_n \,\sum_{v\in V}|A|^{|G^*(v)|}  
\]
with probability 1 for $n$ sufficiently large.  
This concludes the proof of Case (a). \\[2mm]

\noindent Case (b), $P(\{G^* \not\subset \widehat G\})=0$ e.a.s as $n\to\infty$ (non-underfitting).
In order to prove this case we need to show that for any graph $G,$ such that $G\not\supset G^*$ we have that  
\begin{equation}\label{eq:dif_G_Gstar}
   \log \widehat L(G) -\lambda_n\, \sum_{v\in V} |A|^{G(v)} \; < \;   \log \widehat L(G^*) -\lambda_n\, \sum_{v\in V} |A|^{G^*(v)}
\end{equation}
eventually almost surely as $n\to\infty$.
In order to prove that \eqref{eq:dif_G_Gstar} holds, first we prove that
\begin{equation*}
     \log \widehat L(G) -\lambda_n\, \sum_{v\in V} |A|^{G(v)} \; < \;   \log \widehat L(G_{\max}) -\lambda_n\, \sum_{v\in V} |A|^{G_{\max}(v)},
\end{equation*}
with  $G_{\max}$ denoting the complete graph in $V$. 
Then, this inequality together with the arguments presented in case (a) will imply the desired result.\\
Note that we have
\begin{equation} \label{eq:dif_caso2}
\begin{split}
    \log  \widehat L(G_{\max})&\, -\lambda_n\,  \sum_{v\in V} |A|^{G_{\max}(v)} -  \log \widehat L(G) + \lambda_n\,  \sum_{v\in V} |A|^{G(v)} \\[2mm]
    &=\;  \sum_{v \in V} \sum_{a_v,a_{G_{\max}(v)}} N(a_v,a_{G_{\max}(v)}) \log \frac{\widehat \pi(a_v|a_{G_{\max}(v)})}{\widehat \pi(a_v|a_{G(v)})}\\
    &\qquad - \lambda_n\, \sum_{v\in V} \big (  |A|^{G_{\max}(v)} - |A|^{G(v)}\big) \\ 
    &= \;n  \Bigl[ \sum_{v \in V} \sum_{a_v,a_{G_{\max}(v)}} \frac{N(a_v,a_{G_{\max}(v)})}{n} \log \frac{\widehat \pi(a_v|a_{G_{\max}(v)})}{\widehat \pi(a_v|a_{G(v)})} \\
    &\qquad- \frac{\lambda_n}{n}\,\sum_{v\in V} \big (  |A|^{G_{\max}(v)} - |A|^{G(v)}\big)  \Bigr].
\end{split}
\end{equation}
One can see that for $\lambda_n=c\log n$, with $c>0$, the second term in the brackets in the last expression of  \eqref{eq:dif_caso2} vanishes when $n \rightarrow \infty$, i.e.,
\begin{equation*}
    \frac{\lambda_n}{n}\,\sum_{v\in V} \big (  |A|^{G_{\max}(v)} - |A|^{G(v)}\big) \; \xrightarrow[n \to \infty]\; 0.
\end{equation*}
Now, by adding 
\begin{equation*}
    \frac{N(a_v, a_{G_{\max}(v)})}{n}  \, \log \frac{\pi(a_v \vert a_{G(v)})}{\pi(a_v \vert a_{G(v)})} \;=\;0
\end{equation*}
into the first term of the sum in \eqref{eq:dif_caso2}, we can write it as
\begin{align} \label{eq:decomposicao_pt3}
    \sum_{v \in V}  \sum_{a_v,a_{G_{\max}(v)}} & \frac{N(a_v,a_{G_{\max}(v)})}{n} \log \frac{\widehat \pi(a_v|a_{G_{\max}(v)})}{\widehat \pi(a_v|a_{G(v)})} \nonumber \\
    =&\; \sum_{v \in V}  \sum_{a_v,a_{G_{\max}(v)}} \frac{N(a_v,a_{G_{\max}(v)})}{n} \bigg(\log \frac{\widehat \pi(a_v|a_{G_{\max}(v)})}{\widehat \pi(a_v|a_{G(v)})} + \log \frac{\pi(a_v \vert a_{G(v)})}{\pi(a_v \vert a_{G(v)})}\bigg) \nonumber \\
    =& \;\sum_{v \in V} \sum_{a_v,a_{G_{\max}(v)}} \frac{N(a_v,a_{G_{\max}(v)})}{n} \bigg( \log \frac{\widehat \pi(a_v|a_{G_{\max}(v)})}{ \pi(a_v|a_{G(v)})} - \log \frac{ \widehat \pi(a_v|a_{G(v)})}{ \pi(a_v|a_{G(v)})} \bigg) \nonumber \\
    =& \;\underbrace{\sum_{v \in V} \sum_{a_v,a_{G_{\max}(v)}} \frac{N(a_v,a_{G_{\max}(v)})}{n} \log \frac{\widehat \pi(a_v|a_{G_{\max}(v)})}{ \pi(a_v|a_{G(v)})}}_{(1)} \nonumber \\
    & \hspace{50pt}  - \: \underbrace{\sum_{v \in V} \sum_{a_v,a_{G_{\max}(v)}} \frac{N(a_v,a_{G_{\max}(v)})}{n} \log \frac{\widehat \pi(a_v|a_{G(v)})}{ \pi(a_v|a_{G(v)})}}_{(2)}.
\end{align}
As highlighted in \eqref{eq:decomposicao_pt3}, we analyse this expression in two parts.
The second term in the right-hand side of \eqref{eq:decomposicao_pt3} can be written as 
\begin{align*}
    \sum_{v \in V} \sum_{a_v,a_{G(v)}} &\frac{N(a_{G(v)})}{n}\, \widehat\pi(a_v|a_{G(v)}) \log \frac{\widehat \pi(a_v|a_{G(v)})}{\pi(a_v|a_{G(v)})} \\
    =&\; \sum_{v \in V} \sum_{a_{G(v)}} \frac{N(a_{G(v)})}{n}\; \sum_{a_v}\widehat\pi(a_v|a_{G(v)}) \log \frac{\widehat \pi(a_v|a_{G(v)})} {\pi(a_v|a_{G(v)})} \\
    =& \;\sum_{v \in V} \sum_{a_{G(v)}} \frac{N(a_{G(v)})}{n}\; D\big(\widehat \pi(\cdot_v|a_{G(v)}); \pi(\cdot_v|a_{G(v)})\big).
\end{align*}
Observe that, by Lemma~\ref{KLTV} and Proposition~\ref{prop:limite_condicional}, for $\delta>0,$ we have 
\begin{equation*}\label{eq:D2}
\begin{split}
    \sum_{v \in V}\sum_{a_{G(v)}} \frac{N(a_{G(v)})}{n}&\;D(\widehat \pi(\cdot_v|a_{G(v)}) \,;\,\pi(\cdot_v|a_{G(v)}) ) \\
    &\leq\;  \sum_{v \in V}\sum_{a_{G(v)}}  \frac{N(a_{G(v)})}{n} \;\sum_{a_v\in A} \frac{[\,\widehat \pi(a_v|a_{G(v)}) - \pi(a_v|a_{G(v)})\,]^2}{\pi(a_v|a_{G(v)})} \\
    & \leq \sum_{v \in V}\sum_{a_{G(v)}} \frac{N(a_{G(v)})}{n} \;\sum_{a_v\in A} \frac{\delta \log n }{N(a_{G(v)})\pi(a_v|a_{G(v)})} \\
    & \leq |V| |A|^{|V|} \,\frac{\delta}{\pi_{\min}}\frac{\log n}{n} \rightarrow 0
\end{split}
\end{equation*}
as $n \to\infty.$ 
On the other hand, since $\widehat \pi(a_v|a_{G_{\max}(v)})$ are the maximum likelihood estimators of 
$\pi(a_v|a_{G_{\max}(v)})$ and $G^* \subseteq G_{\max}$, the first term in the right-hand side of \eqref{eq:decomposicao_pt3} can be lower-bounded  by 
\begin{equation}\label{eq:limite_inferior}
\begin{split}
    \sum_{v \in V} \sum_{a_v,a_{G_{\max}(v)}} & \frac{N(a_v,a_{G_{\max}(v)})}{n} \log \frac{\widehat \pi(a_v|a_{G_{\max}(v)})}{ \pi(a_v|a_{G(v)})}  \\
    & \hspace{20pt} \geq \: \sum_{v \in V} \sum_{a_v,a_{G_{\max}(v)}} \frac{N(a_v,a_{G_{\max}(v)})}{n} \log \frac{ \pi(a_v|a_{G_{\max}(v)})}{ \pi(a_v|a_{G(v)})}  \\
     & \hspace{20pt} =  \: \sum_{v \in V} \sum_{a_v,a_{G_{\max}(v)}} \frac{N(a_v,a_{G_{\max}(v)})}{n} \log \frac{ \pi(a_v|a_{G^*(v)})}{ \pi(a_v|a_{G(v)})} 
\end{split}
\end{equation}
By Proposition~\ref{prop:limite_conjunto}
\begin{equation*}
    \frac{N(a_v,a_{G_{\max}(v)})}{n}\; =\; \widehat \pi (a_v,a_{G_{\max}(v)}) \;>\; \pi (a_v,a_{G_{\max}(v)}) - \sqrt{\frac{\delta \log n}{n}},
\end{equation*}
eventually almost surely as $n \rightarrow \infty$.
Then, one can see that \eqref{eq:limite_inferior} can be lower-bounded by
\begin{equation}\label{eq:lowerbound}
\begin{split}
    \sum_{v \in V} \sum_{a_v,a_{G_{\max}(v)}} &  \Bigg[\pi (a_v,a_{G_{\max}(v)}) - \sqrt{\frac{\delta \log n}{n}} \Bigg]\log \frac{ \pi(a_v|a_{G^*(v)})}{ \pi(a_v|a_{G(v)})} \\
    & = \sum_{v \in V}\sum_{a_{G_{\max}(v)}} \pi(a_{G_{\max}(v)}) \sum_{a_v} \pi (a_v|a_{G_{\max}(v)}) \log \frac{ \pi(a_v|a_{G^*(v)})}{ \pi(a_v|a_{G(v)})} \\
    & \hspace{100pt} - \sqrt{\frac{\delta \log n}{n}} \sum_{v \in V}\sum_{a_v,a_{G_{\max}(v)}}\log \frac{ \pi(a_v|a_{G^*(v)})}{ \pi(a_v|a_{G(v)})}\\
    & = \sum_{v \in V}\sum_{a_{G_{\max}(v)}} \pi(a_{G_{\max}(v)}) D\big(\pi(\cdot_v|a_{G^*(v)}); \pi(\cdot_v|a_{G(v)})\big) \\
    & \hspace{100pt} + \sqrt{\frac{\delta \log n}{n}} |V| |A|^{|V|} \log \pi_{\min}\\
    & \geq \frac{1}{2}\sum_{v \in V}\alpha(v).
\end{split}
\end{equation}
eventually almost surely as $n \rightarrow \infty.$
Therefore, since $\sum_{v \in V}\alpha(v) > 0$, we have from \eqref{eq:lowerbound} that
\begin{equation*} 
   \log \widehat L(G) -\lambda_n\, \sum_{v\in V} |A|^{G(v)} \;< \;   \log \widehat L(G_{\max}) -\lambda_n\,\sum_{v\in V} |A|^{G_{\max}(v)},
\end{equation*}
eventually almost surely as $n \rightarrow \infty.$
Now, since $G^* \subseteq G_{\max}$, by Case (a) we have that
\begin{equation*}
  \log \widehat L(G_{\max}) -\lambda_n\,\sum_{v\in V} |A|^{G_{\max}(v)}\; \leq\; \log \widehat L(G^*) -\lambda_n\,
  \sum_{v\in V}|A|^{G^*(v)}
\end{equation*}
eventually almost surely as $n \rightarrow \infty,$ and this concludes the proof for Case (b).
Thus, combining the two cases leads to
\begin{equation*}
    \max_{G\colon G\neq G^*} \log \widehat L(G) -\lambda_n\,\sum_{v\in V}|A|^{G(v)}\;<\; \log \widehat L(G^*) -\lambda_n\,\sum_{v\in V}  |A|^{G^*(v)}
\end{equation*}
eventually almost surely as $n \rightarrow \infty$ and this concludes the proof of Theorem~\ref{consistencia}.
\end{proof}

\section*{Discussion}

In this paper, we introduced a model selection approach to estimate the underlying graph of conditional dependencies in a multivariate stochastic process. 
Our method relies on a penalized pseudo-likelihood and employs a global estimation approach.
We have established the almost sure convergence of this estimator to the true underlying graph, specifically in the context of finite graphical models, provided a certain mixing condition is satisfied.
While the case of independent and identically distributed processes has been extensively explored in the literature, this assumption often proves too restrictive for real-world applications where independence does not hold. 

Our approach distinguishes itself by considering the estimation of the entire graph at once, diverging from the usual practice found in the literature, which typically estimates individual neighborhoods for each vertex and subsequently combines them to form the graph. 

In practical terms, the computation of the proposed estimator poses a significant computational challenge, as it involves searching through all potential graph configurations.
To address this, iterative algorithms such as simulated annealing or stepwise greedy algorithms could be employed to facilitate an efficient approximation. Moreover, the definition of the estimator is based on a penalization constant that must be stated before the analysis. The choice of this constant is a challenging problem in regularized approaches and could be addressed with methods like cross-validation. 

Looking forward, there are several promising directions for extending this work. One such direction involves adapting the theoretical framework to accommodate continuous multivariate stochastic processes, thereby broadening the range of potential applications of our methodology. Another line of research is the generalization to infinite vertex sets and unbounded estimators, where the size of the estimated graph is allowed to grow with the sample size. 

\section*{Appendix}
\begin{lemma} \label{lemma:submixing}
If the process $\{X^{(i)}\colon i\in \mathbb N\}$ satisfies the mixing condition \eqref{mixing} with rate $\{\psi(\ell)\}$ then the sub-process $\{X^{(i)}_W\colon i\in \mathbb N\}$ with $W\subset V$ also satisfies the mixing condition with rate $\{\psi(\ell)\}$.
\end{lemma}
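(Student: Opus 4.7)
My approach is to leverage the fact that the mixing condition \eqref{mixing} is equivalent to the two-sided multiplicative bound
\[
(1-\psi(\ell))\,\mathbb P(B) \;\leq\; \mathbb P(B\mid A) \;\leq\; (1+\psi(\ell))\,\mathbb P(B)
\]
on cylinder events $B$ in the future and $A$ in the past (with $\mathbb P(A)>0$), and to show that such multiplicative bounds are preserved under marginalization. Since every sub-process cylinder is a disjoint union of full-process cylinders, the argument reduces to a careful summing-up.

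The main step is the following decomposition. Fix configurations $x^{(1:m)}_W$ with $\mathbb P(X^{(1:m)}_W = x^{(1:m)}_W)>0$ and $x^{(1:k)}_W$, with $n\geq m+\ell$. I would expand the sub-process conditional by first writing $\{X^{(n:(n+k-1))}_W = x^{(1:k)}_W\}$ as a disjoint union over configurations $y$ of the $W^c$-coordinates at times $n,\dots,n+k-1$, and then applying the tower property to condition further on the past $W^c$-coordinates $X^{(1:m)}_{W^c}$. Writing $\alpha(z) = \mathbb P(X^{(1:m)}_{W^c}=z \mid X^{(1:m)}_W=x^{(1:m)}_W)$, this yields
\[
\mathbb P\bigl(X^{(n:(n+k-1))}_W = x^{(1:k)}_W \mid X^{(1:m)}_W = x^{(1:m)}_W\bigr) = \sum_{y,z} \alpha(z)\,\mathbb P\bigl(X^{(n:(n+k-1))} = (x^{(1:k)}_W,y) \mid X^{(1:m)} = (x^{(1:m)}_W,z)\bigr),
\]
with the sum over $z$ restricted to the support of $\alpha$.

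Next, I would apply \eqref{mixing} to each inner conditional probability, obtaining the two-sided bound $(1\pm\psi(\ell))\,\mathbb P(X^{(n:(n+k-1))} = (x^{(1:k)}_W,y))$. Since $\sum_z \alpha(z) = 1$ on the restricted support, averaging preserves the constants $1\pm\psi(\ell)$; summing then over $y$ collapses $\sum_y \mathbb P(X^{(n:(n+k-1))} = (x^{(1:k)}_W,y))$ to the marginal $\mathbb P(X^{(n:(n+k-1))}_W = x^{(1:k)}_W)$. This gives the same two-sided bound for the sub-process, which is precisely the mixing condition \eqref{mixing} with rate $\psi(\ell)$.

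The only real subtlety is bookkeeping around the support: \eqref{mixing} requires $\mathbb P(X^{(1:m)} = (x^{(1:m)}_W,z))>0$ to be applicable, but any $z$ outside the support of $\alpha$ contributes zero to the decomposition and can simply be dropped. No quantitative estimate is needed---the argument relies entirely on the fact that the mixing condition is stated as a multiplicative (rather than additive) deviation, which makes it stable under convex combinations and disjoint unions.
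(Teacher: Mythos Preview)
Your proposal is correct and follows essentially the same route as the paper: both arguments expand the sub-process cylinders as disjoint unions over the $W^c$-coordinates, apply the two-sided multiplicative bound from \eqref{mixing} term by term, and then sum back to recover the marginal. The only cosmetic difference is that the paper expands explicitly only over the future $W^c$-coordinates and applies \eqref{mixing} directly with the $W$-marginal past event, whereas you additionally tower over the past $W^c$-coordinates via the weights $\alpha(z)$; your version therefore makes explicit the convex-combination step that the paper leaves implicit.
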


\begin{proof}
Observe that  for any $x_{W}^{(1:k)}, x_W^{(1:m)} \in A^W$  we have that 
\begin{align*}
\begin{split}
\bigl|& \prob( X^{(n:(n+k-1))}_W=x^{(1:k)}_W\, |\, X^{(1:m)}_W=x^{(1:m)}_W) -  \prob( X^{(n:(n+k-1))}_W=x^{(1:k)}_W)\bigr| \\[2mm]
&= \; \Bigl| \;\sum_{x_{W^c}^{(1:k)}}   \prob( X^{(n:(n+k-1))}_{W\cup W^c}=x^{(1:k)}_{W\cup W^c}\,|\, X^{(1:m)}_W=x^{(1:m)}_W)\\
& \hspace{100pt} - \sum_{x_{W^c}^{(1:k)}}   \prob( X^{(n:(n+k-1))}_{W\cup W^c}=x^{(1:k)}_{W\cup W^c})\Bigr|\\
&\leq \sum_{x_{W^c}^{(1:k)}}  \psi(n-m) \prob(X^{(n:(n+k-1))}_{W\cup W^c}=x^{(1:k)}_{W\cup W^c})\\
&\leq \psi(n-m) \prob(X_W^{(n:(n+k-1))} = x_W^{(1:k)})\,.
\end{split}
\end{align*}
Then the process $\{X^{(i)}_W\colon i\in \mathbb N\}$ with $W\subset \{1,\dots,d\}$ is mixing with rate $\psi(\ell)$.
\end{proof}

\begin{lemma} \label{lemma:mixing_function}
Let the process $\{X^{(i)}\colon i\in \mathbb N\}$ satisfy the mixing condition \eqref{mixing} with rate $\{\psi(\ell)\}_{\ell\in\mathbb N}$ and let 
$f: A^d \rightarrow \mathbb{R}$ be a function.
Then, the process $\big\{f\big(X^{(i)}\big)\colon i\in \mathbb N \big\}$ is also mixing with rate $\{\psi(\ell)\}_{\ell\in \mathbb N}$.
\end{lemma}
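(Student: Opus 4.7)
The plan is to lift the mixing inequality from the elementary events $\{X^{(i)} = x^{(i)}\}$ to the events $\{f(X^{(i)}) = y^{(i)}\}$ via a preimage decomposition, exactly in the spirit of the proof of Lemma~\ref{lemma:submixing}. Fix target values $y^{(1:k)}$ and $y^{(1:m)}$ in the image of $f$, and let
\[
A \;=\; \{f(X^{(n:n+k-1)}) = y^{(1:k)}\}, \qquad B \;=\; \{f(X^{(1:m)}) = y^{(1:m)}\}.
\]
Each of these events is the disjoint union of elementary cylinder events of the form $\{X^{(1:m)} = x^{(1:m)}\}$ (resp.\ $\{X^{(n:n+k-1)} = x^{(1:k)}\}$) with $x^{(i)}$ ranging over $f^{-1}(y^{(i)})$. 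I will denote these preimages by $\mathcal F(y^{(1:k)})$ and $\mathcal F(y^{(1:m)})$.

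The first step is to write $\prob(A\mid B)$ as the weighted average
\[
\prob(A\mid B) \;=\; \sum_{x^{(1:m)} \in \mathcal F(y^{(1:m)})} \frac{\prob(X^{(1:m)} = x^{(1:m)})}{\prob(B)}\, \prob(A \mid X^{(1:m)} = x^{(1:m)}),
\]
which is valid because the elementary cylinders comprising $B$ are disjoint. The weights are nonnegative and sum to one, so to control $|\prob(A\mid B) - \prob(A)|$ it suffices to bound $|\prob(A\mid X^{(1:m)}=x^{(1:m)}) - \prob(A)|$ uniformly in $x^{(1:m)} \in \mathcal F(y^{(1:m)})$ by $\psi(\ell)\prob(A)$.

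The second step is to expand $\prob(A \mid X^{(1:m)} = x^{(1:m)})$ and $\prob(A)$ as sums over $x^{(1:k)} \in \mathcal F(y^{(1:k)})$, apply \eqref{mixing} term by term, and use the triangle inequality:
\[
\bigl|\prob(A \mid X^{(1:m)} = x^{(1:m)}) - \prob(A)\bigr|
\;\leq\; \sum_{x^{(1:k)} \in \mathcal F(y^{(1:k)})} \psi(\ell)\,\prob(X^{(n:n+k-1)} = x^{(1:k)})
\;=\; \psi(\ell)\,\prob(A).
\]
Plugging this uniform bound into the convex combination from the first step yields $|\prob(A\mid B) - \prob(A)| \leq \psi(\ell)\prob(A)$ for $n \geq m+\ell$, which is precisely the mixing condition \eqref{mixing} for the process $\{f(X^{(i)})\}$.

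There is no real obstacle here beyond bookkeeping: the mixing inequality \eqref{mixing} is stated for elementary cylinders, but both pieces of the argument (averaging the conditional probabilities over a partition of $B$, and summing the pointwise bounds over a partition of $A$) preserve the multiplicative form of the bound because the right-hand side $\psi(\ell)\prob(\cdot)$ is linear in the measure of the target event. The only mild subtlety is ensuring that the denominator $\prob(B)$ is nonzero, which we may assume since otherwise the conditional probability in \eqref{mixing} is undefined and the inequality is vacuous.
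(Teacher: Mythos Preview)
Your proof is correct and follows essentially the same preimage-decomposition strategy as the paper: both arguments partition the events $\{f(X^{(1:m)})=y^{(1:m)}\}$ and $\{f(X^{(n:n+k-1)})=y^{(1:k)}\}$ into elementary cylinders and apply \eqref{mixing} term by term. The only cosmetic difference is that you phrase the averaging over the past as a convex combination and then bound uniformly, whereas the paper writes the conditional probability as a ratio of sums and invokes the two-sided form $[1\pm\psi(\ell)]\,\prob(\cdot)$ of the mixing bound directly; the content is the same.
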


\begin{proof}
Denote by $Y^{(i)} = f\big(X^{(i)}\big)$ and let $\prob_Y$ denote the distribution of the process  $\big\{Y^{(i)}\colon i\in \mathbb N \big\}$.
Now, for $i<j$, define the set
\begin{equation*}
  C(y^{(i:j)}) = \big\{\big(x^{(i)}, \dots, x^{(j)}\big) \in A^{d\times {(j-i+1)}}: Y^{(i:j)}=y^{(i:j)}\big\},
\end{equation*}
which denotes all configurations $\big(x^{(i)}, \dots, x^{(j)}\big)$ such that $\{Y^{(i:j)}=y^{(i:j)}\}$ holds.
Then
\begin{equation} \label{eq:proby1m_cond}
    \begin{split}
    \prob_Y \big(&Y^{(n:(n+k-1))} = y^{(1:k)}\, \big\vert\,  Y^{(1:m)} = y^{(1:m)}\big) \\[2mm]
    & = \sum_{x^{(1:k)} \in C(y^{(1:k)})} \prob\big(X^{(n:(n+k-1))} = x^{(1:k)} \,\big\vert  \, \cup_{x^{(1:m)} \in C(y^{(1:m)})} \big\{X^{(1:m)} = x^{(1:m)} \big\} \big)\\
    & = \sum_{x^{(1:k)} \in C(y^{(1:k)})} \frac{\sum_{x^{(1:m)} \in C(y^{(1:m)})}  \prob\big(X^{(n:(n+k-1))} = x^{(1:k)} \,, X^{(1:m)} = x^{(1:m)}\big)}{\sum_{x^{(1:m)} \in C(y^{(1:m)})}  \prob\big( X^{(1:m)} = x^{(1:m)}\big)}
    \end{split}
\end{equation}
and similarly
\begin{equation*} \label{eq:proby1m_cond2}
    \prob_Y \big(Y^{(n:(n+k-1))} = y^{(1:k)}\big) \;=\;
    \sum_{x^{(1:k)} \in C(y^{(1:k)})} \prob\big(X^{(n:(n+k-1))} = x^{(1:k)} \big)\,.
\end{equation*}
Observe that by the mixing property \eqref{mixing} we obtain, for each $x^{(1:m)} \in C(y^{(1:m)})$, that 
\begin{equation}\label{basicineq20}
\begin{split}
[1-\psi(n-m)] &\,\P( X^{(n:(n+k-1))}=x^{(1:k)}) \;\leq\; \\
&\frac{ \prob\big(X^{(n:(n+k-1))} = x^{(1:k)} \,, X^{(1:m)} = x^{(1:m)}\big)}{  \prob\big( X^{(1:m)} = x^{(1:m)}\big)} \\
&\quad\leq\; [1+\psi(n-m)] \,\P( X^{(n:(n+k-1))}=x^{(1:k)})\,.
\end{split}
\end{equation}
Then, substituting the inequalities \eqref{basicineq20} in \eqref{eq:proby1m_cond} we obtain that 
\begin{equation*}
\begin{split}
 \big\vert \;& \prob_Y\big(Y^{(n:(n+k-1))} = y^{(1:k)} \;\big\vert\;  Y^{(1:m)} = y^{(1:m)}\big) - \prob_Y\big(Y^{(n:(n+k-1))} = y^{(1:k)}\big) \; \big\vert\\[3mm]
     &\leq \sum_{x ^{(1:k)} \in C(y^{(1:k)})} \Bigl|
      \frac{\sum_{x^{(1:m)} \in C(y^{(1:m)})}  \prob\big(X^{(n:(n+k-1))} = x^{(1:k)} \,, X^{(1:m)} = x^{(1:m)}\big)}{\sum_{x^{(1:m)} \in C(y^{(1:m)})}  \prob\big( X^{(1:m)} = x^{(1:m)}\big)} \\
      &\qquad\qquad\qquad\qquad       -   \prob\big(X^{(n:(n+k-1))} = x^{(1:k)} \big)\Bigr|
     \\    
        &\leq \sum_{x ^{(1:k)} \in C(y^{(1:k)})} \psi(n-m)  \prob\big(X^{(n:(n+k-1))} = x^{(1:k)}\big) \\
       & = \psi(n-m)\, \prob_Y \big (Y^{(n:(n+k-1))}=y^{(1:k)} \big)
\end{split}
\end{equation*}
and the process $\{Y^{(i)}, i \in \mathbb N\}$ is mixing with rate $\{\psi(\ell)\}$. 
\end{proof}

The following result states a Law of the Iterated Logarithm for stochastic processes satisfying the mixing condition \eqref{mixing}, and the proof is based on the classical result by \cite{oodaira1971}.
This result is essential to prove the rate of convergence of the empirical probabilities in Proposistion~\ref{prop:limite_conjunto}. 

\begin{theorem}\label{LIL_oodaira}
Let $Y^{(i)} = f(X^{(i)})$, with $\{X^{(i)}\colon i\in \mathbb Z\}$ satisfying the mixing condition 
\eqref{mixing} with rate $\psi(
\ell) =  O(1/\ell^{1+\epsilon})$, for some $\epsilon > 0$. 
Define $Z_n = \sum_{i=1}^{n}Y^{(i)}$. Then 
\begin{equation}\label{Zn}
\vert Z_n \vert < (1+\epsilon)\sqrt{2\sigma^2 n \log \log n}
\end{equation}
eventually almost surely as $n\to\infty$, where
\begin{equation}
\sigma^2 = E\big[\big(Y^{(1)}\big)^2\big] + 2 \sum_{j=2}^{n}E(Y^{(1)}Y^{(j)})\,.
\end{equation}
\end{theorem}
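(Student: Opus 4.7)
The plan is to obtain Theorem~\ref{LIL_oodaira} as a direct consequence of the classical Law of the Iterated Logarithm for $\psi$-mixing sequences of \cite{oodaira1971}. The work therefore reduces to verifying that the derived one-dimensional process $\{Y^{(i)}\}$ fits the hypotheses of that result, and then translating the $\limsup$ statement of Oodaira into the explicit eventual upper bound \eqref{Zn}.

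First, I would establish the mixing properties of $\{Y^{(i)}\}$. Since $Y^{(i)}=f(X^{(i)})$ is obtained by applying a fixed measurable function componentwise, Lemma~\ref{lemma:mixing_function} yields that $\{Y^{(i)}\}$ is $\psi$-mixing with the same rate $\psi(\ell)=O(1/\ell^{1+\epsilon})$ as $\{X^{(i)}\}$. In particular $\sum_{\ell\ge 1}\psi(\ell)<\infty$, and \emph{a fortiori} $\sum_{\ell}\psi(\ell)^{1/2}<\infty$, which is the summability hypothesis imposed on the mixing coefficients in Oodaira's LIL. Next, because $X^{(i)}$ takes values in the finite set $A^d$, the random variable $Y^{(i)}=f(X^{(i)})$ is bounded, so all moments are finite and the series
\[
\sigma^2 \;=\; E\bigl[(Y^{(1)})^2\bigr] + 2\sum_{j\ge 2} E(Y^{(1)}Y^{(j)})
\]
converges absolutely by the mixing bound on covariances. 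Stationarity of $\{X^{(i)}\}$ transfers to $\{Y^{(i)}\}$. I would also note that the statement implicitly assumes $E[Y^{(1)}]=0$ (otherwise $Z_n$ grows linearly and the bound would be false); if not, one simply centers $Y^{(i)}$ and applies the result to the centered process.

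With these ingredients in place, Oodaira's theorem yields
\[
\limsup_{n\to\infty}\frac{|Z_n|}{\sqrt{2\sigma^2 n\log\log n}}\;=\;1 \quad\text{almost surely}.
\]
The conclusion \eqref{Zn} follows immediately: for any fixed $\epsilon>0$, the definition of $\limsup$ implies that with probability one there exists $n_0(\omega)$ such that $|Z_n|<(1+\epsilon)\sqrt{2\sigma^2 n\log\log n}$ for every $n\ge n_0$, which is precisely the ``eventually almost surely'' statement of the theorem.

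The main obstacle, in my view, is bookkeeping rather than depth: one must be careful that the mixing condition \eqref{mixing} used throughout the paper matches the precise form of the uniformly mixing coefficient in Oodaira's paper. The condition \eqref{mixing} is a two-sided multiplicative $\psi$-mixing statement, which is in fact stronger than the $\phi$-type mixing Oodaira considers, so the implication is standard; still, the careful reader will expect a brief sentence reconciling the two definitions and verifying that the tail behaviour $O(\ell^{-(1+\epsilon)})$ meets his summability requirement. Once that identification is made, the rest of the argument is a direct quotation of Oodaira together with the $\limsup$-to-eventual-bound reformulation above.
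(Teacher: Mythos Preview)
Your overall strategy coincides with the paper's: verify that $\{Y^{(i)}\}$ inherits the mixing rate via Lemma~\ref{lemma:mixing_function}, use finiteness of the alphabet to control moments, and then quote \cite{oodaira1971}. There is, however, a technical slip in the hypothesis you check. The implication ``$\sum_\ell \psi(\ell)<\infty$, and \emph{a fortiori} $\sum_\ell \psi(\ell)^{1/2}<\infty$'' goes the wrong way: for $\psi(\ell)\in(0,1)$ one has $\psi(\ell)^{1/2}>\psi(\ell)$, and in fact with $\psi(\ell)=O(\ell^{-(1+\epsilon)})$ the series $\sum_\ell \psi(\ell)^{1/2}$ diverges whenever $\epsilon\leq 1$. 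So the summability route does not cover the full range $\epsilon>0$ claimed in the theorem.

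The paper sidesteps this by invoking \cite[Theorem~3]{oodaira1971} in the form that pairs a moment condition with a decay rate: \eqref{Zn} holds whenever $E|Y^{(1)}|^{2+\delta}<\infty$ for some $\delta>0$ and $\psi(\ell)=O(\ell^{-(1+\epsilon)})$ with $\epsilon>1/(1+\delta)$. Because $Y^{(1)}$ is bounded (finite alphabet), $E|Y^{(1)}|^{2+\delta}<\infty$ for every $\delta>0$, so one may choose $\delta$ large enough that $1/(1+\delta)<\epsilon$ for any prescribed $\epsilon>0$. Replacing your summability check with this moment--rate pair fixes the argument; the rest of your proposal (including the $\limsup$-to-eventual-bound translation) is fine and matches the paper.
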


\begin{proof}
The proof follows by \cite[Theorem~3]{oodaira1971}, that states that \eqref{Zn} holds under the pair of hypotheses 
\begin{enumerate}
    \item $E|Y^{(i)}|^{2+\delta} < \infty$ for some $\delta > 0;$
    \item $\psi(\ell) = O(1/\ell^{1+\epsilon})$ for some $\epsilon > 1/(1+\delta)$\,,
\end{enumerate}
as $\{Y^{(i)}\colon i\in\mathbb Z\}$ satisfies the mixing property with the same rate as $\{X^{(i)}\colon i\in\mathbb Z\}$. Moreover, as we are considering stochastic processes $\{X^{(i)}\colon i\in \mathbb Z\}$ defined over a  finite alphabet $A$ and with  $V$ finite, then we have that $E|Y^{(i)}|^{2+\delta} < \infty$ for all $\delta>0$. 
This implies the stated result.  
\end{proof}

\begin{remark}\label{remark_sigma}
A simple calculation shows that if  $\mathbb E(Y^{(1)})=0$ then $\sigma^2 = E\big[\big(Y^{(1)}\big)^2\big] = \text{Var}(Y^{(1)})$. 
\end{remark}

\begin{proof}[Proof of Proposition~\ref{prop:limite_conjunto}]
Fix $a_W \in A^{W}$ and define 
\begin{equation} \label{eq:def_Y2}
Y^{(i)}  = \mathds{1}\{X_W^{(i)} = a_W\} - \pi(a_W),
\end{equation}
for $i = 1,2, \dots, n$.
As $\{X^{(i)}\colon i \in \mathbb N\}$ is mixing with rate $\{\psi(\ell)\}_{\ell\in\mathbb N}$, then by Lemmas~\ref{lemma:submixing} and \ref{lemma:mixing_function} the process $\{Y^{(i)}\colon i \in \mathbb N\}$ is also mixing with the same rate. Also note that $ \mathbb{E}(Y^{(i)}) =0$ 
and $\text{Var}(Y^{(i)})\leq \frac14$.
Now define the partial sum $Z_n = \sum_{i=1}^{n} Y^{(i)}$. 
By Theorem~\ref{LIL_oodaira}, for $\epsilon > 0$ we have 
that
\begin{equation*}
    \vert Z_n \vert < (1+\epsilon) \sqrt{2\sigma^2 n\log\log n},
\end{equation*}
eventually almost surely as $n \to \infty$, with $\sigma^2 \leq \frac14$ (see Remark~\ref{remark_sigma}).
Since, by definition, $Z_n = N(a_W) - n\pi(a_W),$ we obtain that 
\begin{equation*}
  \big\vert n^{-1} Z_n \big\vert \;=\;    \big\vert \widehat\pi(a_W) - \pi(a_W) \big\vert \;<\; \sqrt{\frac{\log\log n}{n}},
\end{equation*}
eventually almost surely as $n \rightarrow \infty.$
Now, for any $\delta > 0$ we have that
\begin{equation*}
     \log\log n < \delta \log n
\end{equation*}
for all $n$ sufficiently large. 
Therefore,
\begin{equation*}
    \big\vert \widehat\pi(a_W) - \pi(a_W) \big\vert < \sqrt{\frac{\delta\log n}{n}}
\end{equation*}
eventually almost surely as $n \to\infty$, and this finishes the proof of the first part of Proposition~\ref{prop:limite_conjunto}. 
To prove the second part, fix the two disjoint subsets $W,W' \subset V$ and the configurations $a_W \in A^{W}, a_{W'}\in A^{W'}$. Define the process $\{Y^{(i)}\colon i\in\mathbb Z\}$ by 
\begin{equation} \label{eq:def_Y}
   Y^{(i)} = \mathds{1}\{X_W^{(i)} = a_W, X_{W'}^{(i)} = a_{W'}\} - \pi(a_W \vert a_{W'})\mathds{1}\{X_W^{(i)} = a_{W'}\}
\end{equation}
and let 
\begin{equation} \label{eq:def_zn}
    Z_n = \sum_{i=1}^{n} Y^{(i)}.
\end{equation}
Analogously to the first part, 
since $\{X^{(i)}: i \in \mathbb N\}$ is mixing with rate $\psi(\ell)=O(1/\ell^{1+\epsilon})$,
$\mathbb{E}(Y^{(i)}) =0$ and $\sigma^2=\text{Var}(Y^{(i)})\leq \frac14 \pi(a_W)$, 
for any $\epsilon > 0$, by Theorem~\ref{LIL_oodaira} we obtain that 
\begin{equation}\label{eq:LIL_aplicado}
    \vert Z_n \vert < (1+\epsilon) \sqrt{2\sigma^2 n\log\log n},
\end{equation}
eventually almost surely as $n \to\infty$. 
Note that $Z_n$ defined in \eqref{eq:def_zn} can be written as
\begin{equation*}
    Z_n = N(a_{W\cup W'}) - \pi(a_W \vert a_{W'}) N(a_{W'}).
\end{equation*}
If we divide $Z_n$ by $N(a_{W'})$ and take $\epsilon=\sqrt{2}-1$, by \eqref{eq:LIL_aplicado} we get that
\begin{equation*}
    \big\vert \widehat\pi(a_W \vert a_{W'}) - \pi(a_W \vert a_{W'}) \big\vert \;<\; \sqrt{\frac{\pi(a_{W'}) n \log\log n}{N(a_{W'})^2}}
\end{equation*}
eventually almost surely as $n \rightarrow \infty.$
By Proposition~\ref{prop:limite_conjunto}, for any $\alpha>0$ we have that 
\[
N(a_{W'}) \; > \; n\pi(a_{W'}) - \sqrt{\delta n \log n} \;>\; (1-\alpha) n\pi(a_{W'})
\]
eventually almost surely as $n\to\infty$. Then we obtain that 
\begin{equation*}
    \big\vert \widehat\pi(a_W \vert a_{W'}) - \pi(a_W \vert a_{W'}) \big\vert \;<\; \sqrt{\frac{\log\log n}{(1-\alpha)N(a_{W'})}}\,.
\end{equation*}
As before, for any $\delta>0$ we have that 
\[
\frac{\log\log n}{1-\alpha} \;< \delta \log n
\]
for sufficiently large $n$, and therefore for all $\delta>0$
\begin{equation*}
    \big\vert \widehat\pi(a_W \vert a_{W'}) - \pi(a_W \vert a_{W'}) \big\vert \;<\; \sqrt{\frac{\delta\log n}{N(a_{W'})}}
\end{equation*}
eventually almost surely as $n\to\infty$. 
\end{proof}

The following basic result about the K\"ullback-Leibler divergence corresponds to
 \cite[Lemma~6.3]{csiszar2006b}.  We omit its proof here. 

\begin{lemma}\label{KLTV}
For any $P$ and $Q$ we have
\[
D(P;Q)  \;\leq\; \sum_{a\in A\colon Q(a)>0} \frac{[P(a)-Q(a)]^2}{Q(a)}\,.
\]
\end{lemma}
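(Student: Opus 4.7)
The proof rests on the elementary inequality $\log x \le x - 1$ for $x > 0$ (with equality only at $x = 1$), applied pointwise to the log-ratio $P(a)/Q(a)$, together with the fact that $P$ and $Q$ each sum to one.

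First I would dispose of the degenerate case. If there exists $a \in A$ with $P(a) > 0$ and $Q(a) = 0$, then by the convention stated just after \eqref{KL} we have $D(P;Q) = +\infty$, and there is nothing to prove (either one interprets the corresponding term $[P(a)-Q(a)]^2/Q(a)$ on the right as $+\infty$, or one simply regards the bound as vacuous). Henceforth I may assume $P$ is supported inside $\{a : Q(a) > 0\}$, which also makes every summand on the right-hand side well-defined.

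Next, using $\log(P(a)/Q(a)) \le P(a)/Q(a) - 1$ and multiplying by $P(a) \ge 0$, summing over $a$ with $Q(a) > 0$ gives
\[
D(P;Q) \;\le\; \sum_{a \colon Q(a) > 0} \frac{P(a)\,[P(a) - Q(a)]}{Q(a)}.
\]
The key algebraic identity is $P(a)[P(a) - Q(a)] = [P(a) - Q(a)]^2 + Q(a)[P(a) - Q(a)]$, which splits each summand into $[P(a)-Q(a)]^2/Q(a) + (P(a) - Q(a))$.

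Finally, the linear remainder vanishes: since $P$ is supported inside $\{Q > 0\}$, we have $\sum_{a \colon Q(a) > 0} P(a) = 1 = \sum_{a \colon Q(a) > 0} Q(a)$, so $\sum_{a \colon Q(a) > 0} (P(a) - Q(a)) = 0$, leaving precisely the claimed chi-squared-type bound. There is no serious obstacle here; the only points requiring a bit of care are the support bookkeeping (to avoid $0/0$ and $+\infty$ issues) and recognising the algebraic identity above, which is exactly what manufactures the $(P-Q)^2/Q$ form from the bound $\log x \le x - 1$.
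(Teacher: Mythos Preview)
Your argument is correct and is exactly the standard proof of this chi-square bound on the Kullback--Leibler divergence via $\log x\le x-1$. The paper does not give its own proof at all: it simply cites \cite[Lemma~6.3]{csiszar2006b} and omits the argument, so there is nothing to compare against.

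One minor quibble on your handling of the degenerate case: if some $P(a)>0=Q(a)$ then $D(P;Q)=+\infty$, while the right-hand side, which sums only over $\{a:Q(a)>0\}$, remains finite, so the inequality as literally stated actually fails there---your ``regard the bound as vacuous'' does not resolve this. That is a wrinkle in the lemma's formulation rather than in your proof, and it is harmless in the paper's applications, where $Q=\pi(\cdot_v\mid a_{G(v)})$ is implicitly taken strictly positive (note the use of $\pi_{\min}$ throughout the proof of Theorem~\ref{consistencia}).
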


\bibliographystyle{apalike}
\bibliography{references.bib}
\end{document}